\documentclass[11pt]{article}

\usepackage[T1]{fontenc}
\usepackage[utf8]{inputenc}
\usepackage{amsmath,amssymb,amsthm,mathtools}
\usepackage[margin=1in]{geometry}
\usepackage{microtype}
\usepackage[hidelinks]{hyperref}

\hypersetup{
  pdftitle={Opposed exponent profiles and real interpolation of measurable subspaces},
  pdfauthor={Asad Ullah}
}

\newtheorem{theorem}{Theorem}[section]
\newtheorem{proposition}[theorem]{Proposition}
\newtheorem{lemma}[theorem]{Lemma}
\newtheorem{corollary}[theorem]{Corollary}
\theoremstyle{remark}
\newtheorem{remark}[theorem]{Remark}

\newcommand{\1}{\mathbf 1}
\newcommand{\G}{\mathcal G}
\newcommand{\F}{\mathcal F}
\newcommand{\N}{\mathbb N}

\newcommand{\norm}[1]{\left\lVert#1\right\rVert}
\newcommand{\X}{\boldsymbol X}
\newcommand{\Y}{\boldsymbol Y}

\title{Opposed exponent profiles and real interpolation
of measurable subspaces}

\author{Asad Ullah\\[4pt]
\small Department of Mathematics and Statistics\\
\small University of North Carolina at Charlotte\\
\small Charlotte, North Carolina, USA\\
\small Corresponding author:
\href{mailto:aullah@uncc.edu}{\texttt{aullah@uncc.edu}}}

\date{}

\begin{document}
\maketitle

\begin{abstract}
We study real interpolation of the $\G$-measurable subspaces
$Y_j$ of variable Lebesgue spaces $L^{p_j(\cdot)}$. For a family
of continuous exponent pairs that vary in opposite directions on
the atoms of $\G$, we prove that $K$-closedness is equivalent to
a uniform logarithmic bound on the oscillation of the exponents.
The same bound characterizes the existence of a common bounded
projection and interpolation equality at a single interior
parameter. If it fails, then for each $0<\theta<1$ there is one
function belonging to all ambient interpolation spaces
$\X_{\theta,q}$, $0<q\leq\infty$, and to none of the corresponding
spaces $\Y_{\theta,q}$. Each endpoint subspace still admits a
bounded positive projection. The exponents can be chosen
arbitrarily close to prescribed constants $1<a<b<\infty$, with
constant sum and smoothness away from zero. We also give a
finite-profile criterion in terms of the endpoint maxima, compute
the associated $K$-functional loss, and obtain a counterexample
for adapted sequence spaces.
\end{abstract}

\noindent\textbf{Keywords:}
real interpolation; variable Lebesgue space; $K$-closed subspace;
conditional expectation; complemented subspace.\\

\section{Introduction}\label{sec:intro}

Let $X_0$ and $X_1$ be Banach function spaces on a probability
space, and let $\G$ be a completed sub-$\sigma$-algebra. The
subspaces $Y_j=X_j\cap L^0(\G)$ satisfy
\begin{equation}\label{eq:inclusion}
 (Y_0,Y_1)_{\theta,q}
 \longrightarrow (X_0,X_1)_{\theta,q}\cap L^0(\G),
 \qquad 0<\theta<1,\quad 0<q\leq\infty.
\end{equation}
If a projection onto the measurable subspace is bounded on both
endpoints, it can be applied to each term of an ambient
decomposition, giving equality in \eqref{eq:inclusion}. Conditional
expectation is the usual choice. We examine what happens when
the endpoint subspaces admit bounded projections but no common
projection is available.

For the general theory of interpolation of subspaces we refer to
\cite{BL}; the relation with invertibility for separately
complemented subspaces is studied in \cite{ACK}. Here the spaces
are $L^{p_j(\cdot)}$, and the subspaces consist of functions
constant on the atoms of $\G$. The obstruction comes from the
locations of the large values of the two exponents inside those
atoms.

We consider exponents of the form $p_0=a+d_nh$ and $p_1=b-d_nh$
on each atom $I_n$, after rescaling a fixed smooth function $h$
to that atom. Here $1<a<b<\infty$ and $d_n\to0$. The two
exponents attain their maxima on disjoint subsets, each occupying
a fixed fraction of $I_n$. Averaging over the appropriate subset
gives a bounded positive projection at each endpoint. An ambient
decomposition can also split the atom between the two endpoints,
whereas a $\G$-measurable decomposition must be constant on it.

For the profiles considered here, the criterion is
\[
 B=\sup_n d_n\log(1/\mu_n),\qquad \mu_n=\mathbb P(I_n).
\]
By Theorem~\ref{thm:main}, $B<\infty$ is equivalent to
$K$-closedness, the existence of a common bounded projection, and
equality in \eqref{eq:inclusion} at one interior parameter.
If $B=\infty$, then for each fixed $\theta$ we construct
\[
 f_\theta\in\bigcap_{0<q\leq\infty}(X_0,X_1)_{\theta,q}
 \quad\text{with}\quad
 f_\theta\notin\bigcup_{0<q\leq\infty}(Y_0,Y_1)_{\theta,q}.
\]
The exponents are continuous, have constant sum, and may be as
close as desired to $a$ and $b$ in the uniform norm.

The condition on $B$ is the form taken here by the atomic
oscillation condition of Jiao, Weisz, Wu and Zhou \cite{JWWZ}:
\begin{equation}\label{eq:knowncondition}
 \sup_A\mathbb P(A)^{p_-(A)-p_+(A)}<\infty.
\end{equation}
On $I_n$, the expression in \eqref{eq:knowncondition} is
$\mu_n^{-2d_n}$, so the condition is exactly $B<\infty$.
Lemma~\ref{lem:average} gives a direct proof that it bounds
conditional expectation. Necessity for our interpolation problem
comes from splitting each atom into two halves and comparing
that decomposition with the constant decompositions allowed in
$Y_0+Y_1$.

The case of an $L^\infty$ endpoint behaves differently.
Kempka and Vyb\'iral \cite[Theorem~8]{KV} identify
$(L^{p(\cdot)},L^\infty)_{\theta,q}$ with a variable Lorentz
space. Their counterexample to a variable-exponent Marcinkiewicz
theorem concerns boundedness of operators between interpolation
spaces. In the martingale setting, Jiao, Weisz, Wu and Zhou
\cite[Theorem~1.1]{JWWZinterp} prove an $L^\infty$-endpoint
identity for variable conditional Hardy spaces.
Ullah \cite[Theorem~1.1]{Ullah} proves the corresponding adapted
sequence-space identity for measurable exponents, without a
compatibility condition between the exponent and the filtration.
Randrianantoanina \cite[Theorems~3.1 and~3.6]{Rand} treats
martingale Hardy spaces over quasi-Banach K\"othe spaces; his
conditional Hardy-space result allows arbitrary such spaces at
the finite endpoint. The examples below concern two finite
variable endpoints.

Our projections are obtained by averaging over subsets where
the exponent is maximal. They are bounded and positive, with no
claim of contractivity. For positive contractive projections in
variable Lebesgue spaces, see Tasoev \cite{Tasoev}; complemented
subsymmetric basic sequences are studied by Ansorena and Bello
\cite{AB}.

After the preliminary estimates, we prove the continuous result
in Section~\ref{sec:continuous}. The finite-profile model in
Section~\ref{sec:finite} gives an explicit loss exponent.
The adapted-sequence example follows in Section~\ref{sec:adapted}
by placing the scalar example in the first coordinate.

\section{Definitions and elementary estimates}\label{sec:prelim}

All functions are identified modulo sets of measure zero.
Sub-$\sigma$-algebras are completed, and $L^0(\G)$ denotes the
space of equivalence classes having a $\G$-measurable representative.
For a measurable exponent $p$ and a set $A$ of positive measure,
write
\[
 p_-(A)=\operatorname*{ess\,inf}_{x\in A}p(x),
 \qquad
 p_+(A)=\operatorname*{ess\,sup}_{x\in A}p(x).
\]
Without $A$, the extrema are taken over the whole probability
space. For a measurable exponent satisfying
$1<p_-\leq p(x)\leq p_+<\infty$ on a probability space, set
\[
 \rho_p(f)=\int |f(x)|^{p(x)}\,d\mathbb P(x),
 \qquad
 \norm{f}_{p(\cdot)}
 =
 \inf\{\lambda>0:\rho_p(f/\lambda)\leq1\}.
\]
The resulting Banach function space is denoted by $L^{p(\cdot)}$.
We use its lattice property, completeness, and the equivalence
\[
 \norm f_{p(\cdot)}\leq1
 \Longleftrightarrow
 \rho_p(f)\leq1;
\]
see \cite{DHHR}. Multiplication by an indicator is contractive.
Norm convergence implies convergence in measure, so
\[
 L^{p(\cdot)}\cap L^0(\G)
\]
is closed: an almost everywhere convergent subsequence of a
norm-convergent sequence has a $\G$-measurable limit.

For positive quantities $U,V$, the notation $U\asymp V$ means
that $cV\leq U\leq CV$ with positive constants independent of the
index under consideration. We write $\X_{\theta,q}$ for
$(X_0,X_1)_{\theta,q}$ and use the analogous convention for other
couples. For a Banach couple $\X=(X_0,X_1)$ we use
\[
 K(t,f;\X)=
 \inf_{f=f_0+f_1}
 \bigl(\norm{f_0}_{X_0}+t\norm{f_1}_{X_1}\bigr),
 \qquad t>0,
\]
and, for $0<\theta<1$ and $0<q\leq\infty$,
\[
 \norm f_{\X_{\theta,q}}
 =
 \begin{cases}
 \displaystyle
 \left(\int_0^\infty
 [t^{-\theta}K(t,f;\X)]^q\,\frac{dt}{t}\right)^{1/q},
 &q<\infty,\\[6pt]
 \displaystyle
 \sup_{t>0}t^{-\theta}K(t,f;\X),
 &q=\infty.
 \end{cases}
\]
For closed subspaces $Y_j\subset X_j$ with the inherited norms,
the subcouple $\Y=(Y_0,Y_1)$ is $K$-closed in $\X$ if a constant
$C$ satisfies
\[
 K(t,f;\Y)\leq C K(t,f;\X)
 \qquad(f\in Y_0+Y_1,\ t>0).
\]
The opposite inequality has constant one. A common bounded
projection means a linear map
\[
 P:X_0+X_1\longrightarrow Y_0+Y_1
\]
that fixes $Y_0+Y_1$ and maps each $X_j$ boundedly into $Y_j$.

Put
\begin{equation}\label{eq:Ctheta}
 c_{\theta,q}=
 \begin{cases}
 [1/(q\theta(1-\theta))]^{1/q},&q<\infty,\\
 1,&q=\infty.
 \end{cases}
\end{equation}
Direct integration gives, for $u,v>0$,
\begin{equation}\label{eq:minintegral}
 \norm{t^{-\theta}\min(u,tv)}_{L^q(dt/t)}
 =
 c_{\theta,q}u^{1-\theta}v^\theta.
\end{equation}
For $q=\infty$ the left side denotes the supremum.

\begin{lemma}[Indicators and atoms]\label{lem:atoms}
Let $A$ have measure $0<\mu\leq1$. If $p(x)\leq P$ on $A$, then
\[
 \norm{\1_A}_{p(\cdot)}\leq\mu^{1/P}.
\]
If also $p=P$ on a subset of $A$ of measure at least $\kappa\mu$,
then
\begin{equation}\label{eq:indicator}
 (\kappa\mu)^{1/P}
 \leq\norm{\1_A}_{p(\cdot)}
 \leq\mu^{1/P}.
\end{equation}
Suppose that $A$ is an atom of $\G$, $X_j=L^{p_j(\cdot)}$, and
$Y_j=X_j\cap L^0(\G)$. With $a_j=\norm{\1_A}_{X_j}$, one has
\begin{equation}\label{eq:atomK}
 \begin{split}
 K(t,z\1_A;\Y)&=|z|\min(a_0,ta_1),\\
 \norm{z\1_A}_{\Y_{\theta,q}}
 &=c_{\theta,q}|z|a_0^{1-\theta}a_1^\theta.
 \end{split}
\end{equation}
Moreover, if $f\in Y_0+Y_1$ equals $z$ on $A$, its intrinsic
interpolation norm is at least the last expression.
\end{lemma}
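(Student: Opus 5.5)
The proof has three parts: modular bounds for the indicator, an exact $K$-functional on an atom, and the integral formula \eqref{eq:minintegral}. We treat them in that order.

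\emph{Indicator bounds.} For the upper bound, take $\lambda=\mu^{1/P}\leq1$. On $A$ we have $p(x)\leq P$ and $1/\lambda\geq1$, so $\lambda^{-p(x)}\leq\lambda^{-P}=\mu^{-1}$. Hence $\rho_p(\1_A/\lambda)\leq\mu\cdot\mu^{-1}=1$, and therefore $\norm{\1_A}_{p(\cdot)}\leq\mu^{1/P}$. For the lower bound, let $E\subset A$ be the set where $p=P$, with $\mathbb P(E)\geq\kappa\mu$. Take any $\lambda<(\kappa\mu)^{1/P}$. Then $\rho_p(\1_A/\lambda)\geq\mathbb P(E)\lambda^{-P}>\kappa\mu/(\kappa\mu)=1$. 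By the equivalence between $\norm{\cdot}\leq1$ and $\rho\leq1$, this gives $\norm{\1_A/\lambda}>1$. So every admissible $\lambda$ satisfies $\lambda\geq(\kappa\mu)^{1/P}$, which is the left inequality in \eqref{eq:indicator}.

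\emph{$K$-functional on an atom.} Since $A$ is an atom of $\G$, every $g\in L^0(\G)$ is almost surely constant on $A$. Take a decomposition $z\1_A=g_0+g_1$ with $g_j\in Y_j$. Multiplication by $\1_A$ is contractive on each $X_j$, and $\1_A g_j$ is still $\G$-measurable. So we may replace $g_j$ by $\1_A g_j$ without increasing the cost. This reduces to decompositions $g_0=w\1_A$ and $g_1=(z-w)\1_A$ with $w$ a scalar, giving
\[
K(t,z\1_A;\Y)=\inf_{w}\bigl(|w|a_0+t|z-w|a_1\bigr).
\]
The function being minimized is convex and piecewise linear in $w$. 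Its minimum over $w\in\mathbb C$ is attained at $w=0$ or $w=z$: by the triangle inequality, $|w|a_0+t|z-w|a_1\geq\min(a_0,ta_1)(|w|+|z-w|)\geq\min(a_0,ta_1)|z|$. Both endpoints are achievable, so $K(t,z\1_A;\Y)=|z|\min(a_0,ta_1)$. Inserting this into \eqref{eq:minintegral} with $u=|z|a_0$ and $v=|z|a_1$ gives $\norm{z\1_A}_{\Y_{\theta,q}}=c_{\theta,q}|z|a_0^{1-\theta}a_1^\theta$.

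\emph{Final claim.} Let $f\in Y_0+Y_1$ with $f=z$ on $A$. Given any decomposition $f=g_0+g_1$ in $\Y$, the pair $(\1_Ag_0,\1_Ag_1)$ is a decomposition of $z\1_A$ in $\Y$ of no larger cost. Hence $K(t,f;\Y)\geq K(t,z\1_A;\Y)=|z|\min(a_0,ta_1)$ for every $t$. Taking the $L^q(dt/t)$ quasi-norm after multiplying by $t^{-\theta}$, which is monotone in the integrand, gives the lower bound. The only step that needs care is the reduction in the second part: it uses that $\1_A g_j$ stays in $Y_j$, which holds because $A\in\G$. The rest is direct computation.
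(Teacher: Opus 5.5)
Your proof is correct and follows essentially the same route as the paper: the choice $\lambda=\mu^{1/P}$ and restriction to the plateau for the indicator bounds, the reduction of $\Y$-decompositions to scalar multiples of $\1_A$ via the lattice property together with the estimate $a_0|z_0|+ta_1|z_1|\geq\min(a_0,ta_1)|z|$, and contractivity of multiplication by $\1_A$ for the final claim. Your aside that the objective is piecewise linear in $w\in\mathbb C$ is inaccurate for complex $w$, but your triangle-inequality argument does not depend on it.
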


\begin{proof}
For $\lambda=\mu^{1/P}\leq1$,
\[
 \int_A\lambda^{-p(x)}\,d\mathbb P
 \leq\mu\lambda^{-P}=1.
\]
On the subset where $p=P$, any admissible $\lambda$ satisfies
$\kappa\mu\lambda^{-P}\leq1$, proving \eqref{eq:indicator}.

A decomposition in $\Y$ restricts on $A$ to constants $z_0,z_1$
with $z_0+z_1=z$. Its cost is at least
\[
 a_0|z_0|+ta_1|z_1|
 \geq\min(a_0,ta_1)|z|.
\]
Putting the whole function into either endpoint proves equality.
Equation~\eqref{eq:minintegral} gives the norm identity. Finally,
multiplication by $\1_A$ is contractive on both $Y_j$ and hence
decreases their $K$-functional.
\end{proof}

\begin{lemma}[Summation]\label{lem:sum}
Let $r=\min(1,q)$, with $r=1$ for $q=\infty$. For a finite family,
\begin{equation}\label{eq:rtriangle}
 \norm{\sum_k f_k}_{\X_{\theta,q}}^r
 \leq\sum_k\norm{f_k}_{\X_{\theta,q}}^r.
\end{equation}
The same estimate holds for a series whose partial sums converge
in $X_0+X_1$, whenever the right side is finite. In particular,
\begin{equation}\label{eq:singleupper}
 \norm f_{\X_{\theta,q}}
 \leq
 c_{\theta,q}
 \norm f_{X_0}^{1-\theta}\norm f_{X_1}^{\theta}
 \qquad(f\in X_0\cap X_1).
\end{equation}
\end{lemma}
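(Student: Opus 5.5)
The plan is to prove \eqref{eq:rtriangle} first at the level of $K$-functionals, then pass to the $L^q(dt/t)$ norm, then extend to series, and finally read off \eqref{eq:singleupper}.

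\emph{The $K$-functional step.} Fix $t>0$ and $\varepsilon>0$. For each $k$ choose a decomposition $f_k=f_{k,0}+f_{k,1}$ with
\[
 \norm{f_{k,0}}_{X_0}+t\norm{f_{k,1}}_{X_1}\leq K(t,f_k;\X)+\varepsilon 2^{-k}.
\]
Summing these decompositions gives an admissible decomposition of $\sum_k f_k$. The triangle inequality in $X_0$ and $X_1$ then yields
\[
 K\Bigl(t,\sum_k f_k;\X\Bigr)\leq\sum_k K(t,f_k;\X).
\]

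\emph{Passing to the $\X_{\theta,q}$ norm.} Multiply by $t^{-\theta}$ and take the $L^q(dt/t)$ quasi-norm; the case splits according to $q$.

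If $q\geq1$ (including $q=\infty$), $L^q(dt/t)$ is a normed space. Minkowski's inequality therefore gives \eqref{eq:rtriangle} with $r=1$.

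If $q<1$, use that $\norm{\cdot}_{L^q}^q$ is subadditive. This follows from the pointwise inequality $(\sum_k x_k)^q\leq\sum_k x_k^q$ for nonnegative $x_k$, combined with the monotonicity of the integral. Together with the $K$-bound above, this gives \eqref{eq:rtriangle} with $r=q$.

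\emph{Series.} Let $S_N$ be partial sums converging to $S$ in $X_0+X_1$. The $K$-functional at fixed $t$ is a norm on $X_0+X_1$ equivalent to the sum norm, so $K(t,S_N)\to K(t,S)$ for each $t$. By the finite case,
\[
 K(t,S_N;\X)\leq\sum_{k\leq N}K(t,f_k;\X)\leq\sum_{k}K(t,f_k;\X)
\]
for every $N$. Letting $N\to\infty$ gives the pointwise bound $K(t,S;\X)\leq\sum_k K(t,f_k;\X)$. The same Minkowski argument, applied to countable sums (monotone convergence handles the tails), then gives the estimate.

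\emph{The bound \eqref{eq:singleupper}.} For $f\in X_0\cap X_1$, use the two trivial decompositions $f=f+0$ and $f=0+f$. They give
\[
 K(t,f;\X)\leq\min\bigl(\norm f_{X_0},\,t\norm f_{X_1}\bigr).
\]
Applying \eqref{eq:minintegral} with $u=\norm f_{X_0}$ and $v=\norm f_{X_1}$ gives \eqref{eq:singleupper} directly. If $u$ or $v$ is zero, then $f=0$ and there is nothing to prove.

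The only mildly delicate point is the continuity of $K(t,\cdot)$ in the series step, and it is handled by the norm equivalence noted there.
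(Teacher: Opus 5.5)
Your proposal is correct and follows the paper's proof: subadditivity of $K(t,\cdot\,;\X)$, Minkowski for $q\geq1$ and $(\sum_k x_k)^q\leq\sum_k x_k^q$ for $q<1$, the estimate $|K(t,S_N;\X)-K(t,S;\X)|\leq\max(1,t)K(1,S_N-S;\X)$ for series, and the two trivial decompositions combined with \eqref{eq:minintegral} for \eqref{eq:singleupper}. The only cosmetic difference is in the series step: you first derive the pointwise bound $K(t,S;\X)\leq\sum_k K(t,f_k;\X)$ and then use monotone convergence, whereas the paper applies Fatou's lemma directly to the partial sums $S_N$.
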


\begin{proof}
The $K$-functional is subadditive. For $q\geq1$, use Minkowski's
inequality, including the supremum version for $q=\infty$. For
$q<1$, use
\[
 \left(\sum b_k\right)^q\leq\sum b_k^q
\]
for nonnegative $b_k$.

If $S_N\to f$ in the sum space, then, for fixed $t$,
\[
 \begin{split}
 |K(t,S_N;\X)-K(t,f;\X)|
 &\leq K(t,S_N-f;\X)\\
 &\leq\max(1,t)K(1,S_N-f;\X)
 \longrightarrow0.
 \end{split}
\]
Fatou's lemma, or taking suprema when $q=\infty$, proves the series
assertion. The estimate in \eqref{eq:singleupper} follows by placing
$f$ entirely in either endpoint and applying \eqref{eq:minintegral}.
\end{proof}

We shall use two averaging operators: conditional expectation and
a projection obtained by averaging where the exponent is maximal.

\begin{lemma}[Averaging under atomic oscillation control]
\label{lem:average}
Let $\G$ be generated, modulo null sets, by a countable partition
$(A_n)$ into sets of positive measure $\mu_n$. Suppose
$1<r_0\leq p\leq s_0<\infty$ and
\[
 B_p=
 \sup_n[p_+(A_n)-p_-(A_n)]\log(1/\mu_n)<\infty.
\]
Then the usual conditional expectation $E_\G$ is bounded on
$L^{p(\cdot)}$.
\end{lemma}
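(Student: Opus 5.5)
By the unit-ball equivalence $\norm f_{p(\cdot)}\leq1\Leftrightarrow\rho_p(f)\leq1$ and homogeneity, it suffices to show that $\rho_p(E_\G f)\leq C$ whenever $\rho_p(f)\leq1$, with $C$ depending only on $r_0,s_0,B_p$. Then $\norm{E_\G f}_{p(\cdot)}\leq\max(1,C)$ follows, since $\rho_p(g/\lambda)\leq\lambda^{-r_0}\rho_p(g)$ for $\lambda\geq1$. On each atom $A_n$ write $E_\G f=m_n$, where $m_n$ is the average of $f$ over $A_n$, and set $p_n^-=p_-(A_n)$, $p_n^+=p_+(A_n)$. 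The contribution of $A_n$ to $\rho_p(E_\G f)$ is $\int_{A_n}|m_n|^{p(x)}\,d\mathbb P$. The plan is to dominate this by a constant times $\int_{A_n}(|f|^{p(x)}+\mu_n)\,d\mathbb P$ and then sum over $n$. The summed bound is at most $C(\rho_p(f)+\sum_n\mu_n^2)\leq 2C$, since $\sum_n\mu_n\leq 1$.

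First, the size of $m_n$ is controlled. Since $\rho_p(f)\leq1$, we have $\int_{A_n}|f|^{p_n^-}\leq\int_{A_n}(|f|^{p(x)}+1)\leq 2$, using $|f|^{p_n^-}\leq |f|^{p(x)}$ where $|f|\geq1$. Jensen then gives $|m_n|^{p_n^-}\leq \mu_n^{-1}\int_{A_n}|f|^{p_n^-}\leq 2\mu_n^{-1}$. Consequently, when $|m_n|\geq1$,
\[
 |m_n|^{p(x)}\leq |m_n|^{p_n^-}|m_n|^{p_n^+-p_n^-}
 \leq |m_n|^{p_n^-}(2/\mu_n)^{(p_n^+-p_n^-)/p_n^-}
 \leq C_1|m_n|^{p_n^-}.
\]
Here $C_1=2^{s_0}e^{B_p}$, because $(p_n^+-p_n^-)\log(1/\mu_n)\leq B_p$ and $p_n^-\geq r_0>1$. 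When $|m_n|<1$, we simply have $|m_n|^{p(x)}\leq |m_n|^{p_n^-}$. In both cases $\int_{A_n}|m_n|^{p(x)}\leq C_1\mu_n|m_n|^{p_n^-}\leq C_1\int_{A_n}|f|^{p_n^-}$, by Jensen again.

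The main obstacle is the last step: returning from $|f|^{p_n^-}$ to $|f|^{p(x)}$. The pointwise inequality $|f|^{p_n^-}\leq|f|^{p(x)}$ holds only where $|f|\geq1$. On the set where $|f|<1$, the quantity $|f|^{p_n^-}$ can be much larger than $|f|^{p(x)}$, and the naive bound by $\mu_n$ is not enough. I would use the standard splitting $f=f\1_{\{|f|\geq1\}}+f\1_{\{|f|<1\}}$ and handle the two pieces separately, using sublinearity of the modular up to a factor $2^{s_0}$.

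The large part is handled exactly as above. For the small part $g$, with $|g|<1$, we have $|E_\G g|\leq1$. Here I would use a second Jensen-type step at exponent level: $|m_n(g)|^{p(x)}\leq |m_n(g)|^{p_n^-}$ has the wrong direction, so I would instead compare against $p_n^+$. Writing $|g|^{p_n^+}\leq |g|^{p(x)}$, Jensen gives $|m_n(g)|^{p_n^+}\leq\mu_n^{-1}\int_{A_n}|g|^{p(x)}$. To pass from $p_n^+$ to $p(x)$, I would use the elementary inequality $t^{p(x)}\leq t^{p_n^+}\cdot t^{-(p_n^+-p_n^-)}$ together with a case split. If $t=|m_n(g)|\geq\mu_n$, then $t^{-(p_n^+-p_n^-)}\leq\mu_n^{-(p_n^+-p_n^-)}\leq e^{B_p}$. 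If $t<\mu_n$, then $t^{p(x)}\leq\mu_n^{r_0}\leq\mu_n$. Hence $\int_{A_n}|m_n(g)|^{p(x)}\leq e^{B_p}\int_{A_n}|g|^{p(x)}+\mu_n^2$. Summing over $n$ finishes the argument. This small-values case split, which is where the logarithmic condition enters a second time, is the step I expect to require the most care.
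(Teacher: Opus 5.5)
Your proof is correct, and its first half is the paper's proof: Jensen at the exponent $p_-(A_n)$, the bound $|m_n|^{p_-(A_n)}\le 2/\mu_n$, and the use of $B_p$ to turn $(2/\mu_n)^{(p_+(A_n)-p_-(A_n))/p_-(A_n)}$ into a constant. The paper gets the sharper constant $D=2^{s_0/r_0}e^{B_p/r_0}$, but that difference is immaterial.

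The ``main obstacle'' you identify afterwards is not real. You had already noted that $\int_{A_n}|f|^{p_n^-}\le\int_{A_n}(|f|^{p(x)}+1)=v_n+\mu_n$, where $v_n=\int_{A_n}|f|^{p(x)}$. So your estimate $\int_{A_n}|m_n|^{p(x)}\le C_1\int_{A_n}|f|^{p_n^-}$ already sums to
\[
 \rho_p(E_\G f)\le C_1\sum_n(v_n+\mu_n)\le C_1\bigl(\rho_p(f)+1\bigr)\le 2C_1 ,
\]
because the atoms partition a probability space. This is exactly how the paper concludes. It uses $|u|^{r_n}\le 1+|u|^{p(x)}$ to bound each atom's contribution by $D(\mu_n+v_n)$ and then sums. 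An additive error of $\mu_n$ per atom is harmless here. Your target of $\mu_n^2$ per atom, i.e.\ $\int_{A_n}\mu_n\,d\mathbb P$, was self-imposed.

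The splitting $f=f\1_{\{|f|\ge1\}}+f\1_{\{|f|<1\}}$ that you then carry out is also valid. For the small part, Jensen at $p_+(A_n)$ together with the threshold $|m_n(g)|\ge\mu_n$ versus $|m_n(g)|<\mu_n$ does give
\[
 \int_{A_n}|m_n(g)|^{p(x)}\le e^{B_p}\int_{A_n}|g|^{p(x)}+\mu_n^2 .
\]
However, this route costs a factor $2^{s_0-1}$ from the modular quasi-triangle inequality and a second use of $B_p$. In return it only replaces the additive term $\mu_n$ by $\mu_n^2$, which gains nothing on a probability space. You can stop after your second paragraph, adding the one-line summation above.
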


\begin{proof}
Normalize $\rho_p(f)\leq1$. Write
\[
 r_n=p_-(A_n),\quad s_n=p_+(A_n),\quad
 v_n=\int_{A_n}|f|^{p(x)}\,d\mathbb P,\quad
 m_n=\mu_n^{-1}\int_{A_n}f\,d\mathbb P.
\]
Since $|u|^{r_n}\leq1+|u|^{p(x)}$ on $A_n$, Jensen's inequality gives
\[
 |m_n|^{r_n}\leq1+v_n/\mu_n\leq2/\mu_n.
\]
If $|m_n|>1$, then
\begin{align*}
 \int_{A_n}|m_n|^{p(x)}\,d\mathbb P
 &\leq\mu_n|m_n|^{s_n}\\
 &\leq(\mu_n+v_n)(2/\mu_n)^{(s_n-r_n)/r_n}\\
 &\leq D(\mu_n+v_n),
 \qquad D=2^{s_0/r_0}e^{B_p/r_0}.
\end{align*}
If $|m_n|\leq1$, the same bound follows from the upper bound $\mu_n$.
Summing gives $\rho_p(E_\G f)\leq2D$. Consequently
\[
 \norm{E_\G f}_{p(\cdot)}\leq(2D)^{1/r_0}.
\]
Homogeneity proves boundedness. All averages exist because
$L^{p(\cdot)}\subset L^1$ on a probability space when $p_->1$.
\end{proof}

\begin{lemma}[Projection from a maximal-exponent region]
\label{lem:plateau}
Let $\G$ be generated, modulo null sets, by a countable partition
$(A_n)$ into sets of positive measure, and suppose that
$1<r_0\leq p\leq s_0<\infty$. For each $n$, let $E_n\subset A_n$
satisfy
\[
 \mathbb P(E_n)\geq\kappa\mathbb P(A_n),\qquad
 p(x)=P_n\text{ on }E_n,\qquad
 p(x)\leq P_n\text{ on }A_n,
\]
where $\kappa>0$ is independent of $n$. Then
\[
 Rf=
 \sum_n
 \left(\frac1{\mathbb P(E_n)}\int_{E_n}f\,d\mathbb P\right)
 \1_{A_n}
\]
is a positive bounded projection from $L^{p(\cdot)}$ onto its
$\G$-measurable subspace, with norm at most
$(1+\kappa^{-1})^{1/r_0}$.
\end{lemma}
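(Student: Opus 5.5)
First I will settle the algebraic facts, and then I will prove the norm bound through the modular. Put $m_n(f)=\mathbb P(E_n)^{-1}\int_{E_n}f\,d\mathbb P$; this is finite because $L^{p(\cdot)}\subset L^1$ when $p_->1$. The map $R$ is linear and positive. Its values are constant on each $A_n$, so they are $\G$-measurable. If $f$ is $\G$-measurable, then $f$ equals a constant $z_n$ on $A_n$. Since $E_n\subset A_n$, we get $m_n(f)=z_n$ and hence $Rf=f$. Thus $R$ fixes the $\G$-measurable subspace, and so $R^2=R$ once we know that $R$ maps into $L^{p(\cdot)}$. It remains to prove the norm bound.

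To bound the norm, normalize so that $\rho_p(f)\leq1$, and put $v_n=\int_{E_n}|f|^{p(x)}\,d\mathbb P$, so that $\sum_n v_n\leq1$. On $E_n$ the exponent is the constant $P_n$. Jensen's inequality for the convex function $u\mapsto|u|^{P_n}$ therefore gives
\[
 |m_n|^{P_n}\leq\frac{v_n}{\mathbb P(E_n)}.
\]
Now estimate the modular of $Rf$ on $A_n$, splitting according to the size of $|m_n|$. If $|m_n|\geq1$, then $p\leq P_n$ on $A_n$ gives
\[
 \int_{A_n}|m_n|^{p(x)}\,d\mathbb P\leq\mathbb P(A_n)|m_n|^{P_n}\leq\frac{\mathbb P(A_n)}{\mathbb P(E_n)}v_n\leq\kappa^{-1}v_n .
\]
If $|m_n|<1$, this crude bound fails because $p$ may be smaller than $P_n$ off $E_n$.

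That small-average case is the main obstacle. The remedy is to separate the estimate on $E_n$, where it is sharp, from the estimate on $A_n\setminus E_n$. On $E_n$ itself the contribution is exactly $\mathbb P(E_n)|m_n|^{P_n}\leq v_n$. On $A_n\setminus E_n$ we only have $|m_n|^{p(x)}\leq|m_n|^{r_0}$, and bounding this by $\mathbb P(A_n)$ and summing over $n$ gives a total of at most $1$. Putting the cases together yields
\[
 \rho_p(Rf)\leq\sum_n\bigl(\kappa^{-1}v_n+\mathbb P(A_n)\bigr)\leq 1+\kappa^{-1}.
\]
Since $1+\kappa^{-1}\geq1$ and $p\geq r_0$, convexity of the modular gives $\rho_p\bigl(Rf/(1+\kappa^{-1})^{1/r_0}\bigr)\leq(1+\kappa^{-1})^{-1}\rho_p(Rf)\leq1$. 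Therefore $\norm{Rf}_{p(\cdot)}\leq(1+\kappa^{-1})^{1/r_0}$, and homogeneity extends this to general $f$. This gives the stated bound.

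The constant comes out as $1+\kappa^{-1}$ without sharpening the case analysis, because the large-average case contributes $\kappa^{-1}\sum v_n$ and the small-average case contributes at most $\sum_n\mathbb P(A_n)=1$. The essential point, in contrast with Lemma~\ref{lem:average}, is that the maximal exponent on $A_n$ is attained on a set of proportional measure. This is what allows Jensen's inequality to be applied at the exponent $P_n$, with no oscillation loss when $|m_n|\geq1$.
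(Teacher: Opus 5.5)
Your proof is correct and is essentially the paper's argument: Jensen's inequality on $E_n$ at the constant exponent $P_n$ bounds the atoms with $|m_n|\geq1$ by $\kappa^{-1}v_n$, the other atoms contribute at most $\sum_n\mathbb P(A_n)=1$, and $\rho_p(Rf)\leq1+\kappa^{-1}$ gives the stated norm bound. Two small remarks: the case $|m_n|<1$ is no real obstacle, since then $|m_n|^{p(x)}\leq1$ on all of $A_n$ and the split into $E_n$ and $A_n\setminus E_n$ is unnecessary; and the final scaling step rests on the pointwise bound $\lambda^{-p(x)}\leq\lambda^{-r_0}$ for $\lambda\geq1$, not on convexity of the modular.
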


\begin{proof}
Let $m_n$ be the displayed average and normalize $\rho_p(f)\leq1$.
For $|m_n|>1$, Jensen's inequality on $E_n$ gives
\[
 \begin{split}
 \int_{A_n}|m_n|^{p(x)}\,d\mathbb P
 &\leq\mathbb P(A_n)|m_n|^{P_n}\\
 &\leq\kappa^{-1}\int_{E_n}|f|^{P_n}\,d\mathbb P.
 \end{split}
\]
The sum of the contributions with $|m_n|\leq1$ is at most one.
Thus $\rho_p(Rf)\leq1+\kappa^{-1}$, which gives the norm bound.
The formula defines a linear positive operator and fixes every
$\G$-measurable function in the space, proving the assertion.
\end{proof}

\section{A criterion for opposed continuous profiles}
\label{sec:continuous}

Fix constants
\begin{equation}\label{eq:parameters}
 1<a<b<\infty,\qquad
 0<\varepsilon<\min\{a-1,(b-a)/2\}.
\end{equation}
Let $(\mu_n)_{n\geq1}$ and $(d_n)_{n\geq1}$ satisfy
\begin{equation}\label{eq:sequences}
 \mu_n>0,\quad \sum_{n\geq1}\mu_n<1,\qquad
 0<d_n\leq\varepsilon,\quad d_n\longrightarrow0.
\end{equation}
Set
\[
 s_n=\sum_{k\geq n}\mu_k,\qquad
 I_n=(s_{n+1},s_n),\qquad I_0=(s_1,1).
\]
Let $\G$ be generated by these intervals and all Lebesgue null sets
on $[0,1]$.

Choose a smooth function $h$ on $[0,1]$, vanishing near both
endpoints, such that
\begin{equation}\label{eq:h}
 \begin{gathered}
 -1\leq h\leq1,\qquad
 h\geq0\text{ on }[0,1/2],\quad
 h\leq0\text{ on }[1/2,1],\\
 h=1\text{ on }[1/8,3/8],\qquad
 h=-1\text{ on }[5/8,7/8].
 \end{gathered}
\end{equation}
For completeness, put $\eta(t)=e^{-1/t}$ for $t>0$ and $\eta(t)=0$
otherwise,
\[
 S(t)=\frac{\eta(t)}{\eta(t)+\eta(1-t)},
\]
and
\[
 \psi(u)=S(16u-1)S(7-16u),\qquad
 h(u)=\psi(u)-\psi(1-u).
\]
Then $h$ satisfies \eqref{eq:h}.

Define
\begin{equation}\label{eq:exponents}
 \begin{split}
 p_0(x)&=a+d_nh((x-s_{n+1})/\mu_n),\\
 p_1(x)&=b-d_nh((x-s_{n+1})/\mu_n),
 \qquad x\in I_n,\ n\geq1,
 \end{split}
\end{equation}
and set $p_0=a$, $p_1=b$ at all other points. Finally, write
\[
 X_j=L^{p_j(\cdot)}([0,1]),\qquad
 Y_j=X_j\cap L^0(\G),\qquad
 B=\sup_{n\geq1}d_n\log(1/\mu_n).
\]

\begin{theorem}\label{thm:main}
For the preceding construction, $p_0,p_1$ are continuous on $[0,1]$
and smooth on $(0,1]$. They satisfy
\[
 p_0+p_1=a+b,\qquad
 \norm{p_0-a}_\infty,\norm{p_1-b}_\infty\leq\varepsilon,
 \qquad 1<(p_0)_+<(p_1)_-<\infty.
\]
Each $Y_j$ is the range of a bounded positive projection on $X_j$,
with bounds depending only on $a,b,\varepsilon$.

The following conditions are equivalent:
\begin{enumerate}
 \item $B<\infty$.
 \item $E_\G$ is bounded on both $X_0$ and $X_1$.
 \item There is a common bounded projection for the subcouple
 $\Y\subset\X$.
 \item $\Y$ is $K$-closed in $\X$.
 \item For at least one $0<\theta<1$ and $0<q\leq\infty$,
 \[
 \Y_{\theta,q}=\X_{\theta,q}\cap L^0(\G)
 \]
 as sets.
 \item The preceding equality holds with equivalent quasi-norms for
 every $0<\theta<1$ and $0<q\leq\infty$.
\end{enumerate}
If $B=\infty$, then for each $0<\theta<1$ there exists
$f_\theta\in Y_0$ such that
\begin{equation}\label{eq:strongfailure}
 f_\theta\in\bigcap_{0<q\leq\infty}\X_{\theta,q},
 \qquad
 f_\theta\notin\bigcup_{0<q\leq\infty}\Y_{\theta,q}.
\end{equation}
Thus the inclusion in \eqref{eq:inclusion} is strict for every
parameter.
\end{theorem}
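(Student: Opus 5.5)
The plan is to dispose of the structural assertions first, then run the cycle of implications $1\Rightarrow2\Rightarrow3\Rightarrow6\Rightarrow5$ and $3\Rightarrow4$, and finally to prove the single hard direction by contrapositive: $B=\infty$ yields the function $f_\theta$ of \eqref{eq:strongfailure}. That function refutes both 5 and 4 at once, so it closes the cycle.

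\textbf{Structural assertions.} Continuity, smoothness, $p_0+p_1=a+b$ and the uniform bounds are immediate from \eqref{eq:exponents}. The only points needing care are these:
\begin{itemize}
\item $h$ vanishes near $0$ and $1$, so the pieces glue smoothly at the points $s_n$.
\item $d_n\to0$ while $s_n\to0$, which gives continuity at $0$.
\item $(p_0)_+\le a+\varepsilon<b-\varepsilon\le(p_1)_-$ by \eqref{eq:parameters}.
\end{itemize}
For the projections I apply Lemma~\ref{lem:plateau} with $\kappa=1/4$ on each $I_n$, $n\ge1$:
\begin{itemize}
\item for $p_0$, take $E_n$ to be the rescaled copy of $[1/8,3/8]$, where $p_0=a+d_n$ is maximal;
\item for $p_1$, take the rescaled copy of $[5/8,7/8]$, where $p_1=b+d_n$ is maximal;
\item on $I_0$ the exponents are constant, so $E_0=I_0$.
\end{itemize}
This gives bounds $(1+4)^{1/r_0}$ with $r_0=a-\varepsilon$, which depend only on $a$ and $\varepsilon$.

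\textbf{The easy implications.}
\begin{itemize}
\item $1\Rightarrow2$ is Lemma~\ref{lem:average}, since $p_\pm$ oscillates on $I_n$ by $2d_n$.
\item $2\Rightarrow3$: take $P=E_\G$.
\item $3\Rightarrow6$ and $3\Rightarrow4$: apply $P$ to the two pieces of any decomposition $f=f_0+f_1$ of an $f\in Y_0+Y_1$. This gives $K(t,f;\Y)\le\|P\|K(t,f;\X)$. That is $K$-closedness, and integrating gives 6 (the reverse inequality being trivial).
\item $6\Rightarrow5$ is trivial.
\end{itemize}
It therefore remains to show that $B=\infty$ produces $f_\theta$.

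\textbf{Single-atom $K$-functional.} For the atom $I_n$ write $\mu=\mu_n$ and $d=d_n$. By Lemma~\ref{lem:atoms}, with $\kappa=1/4$ and $P=a+d$ or $b+d$,
\[
a_0=\|\1_{I_n}\|_{X_0}\asymp\mu^{1/(a+d)},\qquad a_1=\|\1_{I_n}\|_{X_1}\asymp\mu^{1/(b+d)}.
\]
Hence $\|\1_{I_n}\|_{\Y_{\theta,q}}\asymp\mu^{(1-\theta)/(a+d)+\theta/(b+d)}$.

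In $\X$ I split $\1_{I_n}=\1_{L}+\1_{R}$, where $L$ is the left half and $R$ the right half of the atom. On $L$ we have $p_0\ge a$ and $p_1\le b$; on $R$ we have $p_0\le a$ and $p_1\ge b$. Placing $\1_L$ in $X_1$ and $\1_R$ in $X_0$ costs
\[
\|\1_R\|_{X_0}\le\mu^{1/a},\qquad \|\1_L\|_{X_1}\le\mu^{1/b}.
\]
Combining this split with the single-endpoint placements and \eqref{eq:minintegral} gives
\[
\|\1_{I_n}\|_{\X_{\theta,q}}\lesssim\mu^{(1-\theta)/a+\theta/b}.
\]
I would rather use the additive form $\min(\mu^{1/a},t\mu^{1/b})$ over the split than \eqref{eq:singleupper}, since the two halves are placed in different endpoints. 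The ratio of the $\X$-norm to the $\Y$-norm is then
\[
\mu^{\,(1-\theta)(1/a-1/(a+d))+\theta(1/b-1/(b+d))}\approx\mu^{c\,d}=e^{-c\,d\log(1/\mu)},
\]
with $c=c(a,b,\theta)>0$.

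\textbf{Constructing $f_\theta$ (main obstacle).} When $B=\infty$ I choose a subsequence with $d_{n_k}\log(1/\mu_{n_k})\ge k^2$, hence $\ge 4^k$ after passing to a further subsequence, so the loss grows geometrically. I then set
\[
f_\theta=\sum_k z_k\1_{I_{n_k}},\qquad z_k=\frac{2^{-k}}{\|\1_{I_{n_k}}\|_{\X_{\theta,q}}},
\]
using a normalization uniform in $q$. The natural choice is the bound $\mu^{(1-\theta)/a+\theta/b}$, whose $q$-dependence lies only in the constant $c_{\theta,q}$.

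\begin{itemize}
\item \emph{Membership in $\X_{\theta,q}$ for every $q$.} Apply Lemma~\ref{lem:sum}: the $r$-th powers form the series $\sum_k 2^{-kr}c_{\theta,q}^r$, which converges for every $q$.
\item \emph{Membership in $Y_0$.} Check that $\rho_{p_0}$ converges, possibly after thinning the subsequence further so that $\mu_{n_k}$ decays fast.
\item \emph{Non-membership in $\Y_{\theta,q}$ for every $q$.} By the last clause of Lemma~\ref{lem:atoms}, $\|f_\theta\|_{\Y_{\theta,q}}\gtrsim z_k\|\1_{I_{n_k}}\|_{\Y_{\theta,q}}\gtrsim 2^{-k}e^{c\,d\log(1/\mu)}\to\infty$.
\end{itemize}

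The hard step is making these estimates uniform in $q$, which means controlling the constants $c_{\theta,q}$ as $q\to0$. This is handled by the factor $2^{-k}$ against the exponential gain. I expect the real difficulty to be confirming the $\X$ upper bound with constants independent of $n$, which requires the additive $K$-functional estimate for the split rather than \eqref{eq:singleupper}.
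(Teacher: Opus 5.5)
Your structural part, the endpoint projections and the implications $1\Rightarrow2\Rightarrow3\Rightarrow4$ are fine. For $3\Rightarrow6$ you also need $X_1\hookrightarrow X_0$, which can be checked on the modular. It guarantees that every $\G$-measurable element of $\X_{\theta,q}$ lies in $X_0\cap L^0(\G)=Y_0+Y_1$, so that $P$ fixes it.

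The genuine gap is your ambient bound $\norm{\1_{I_n}}_{\X_{\theta,q}}\lesssim\mu_n^{(1-\theta)/a+\theta/b}$. It is false, and the $f_\theta$ you normalize with it is not in $\X_{\theta,q}$. The split gives only $K(t,\1_{I_n};\X)\le\mu^{1/a}+t\mu^{1/b}$, which is comparable to $\max(\mu^{1/a},t\mu^{1/b})$, not to the minimum. So \eqref{eq:minintegral} cannot be applied with $u=\mu^{1/a}$, $v=\mu^{1/b}$.

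In fact no argument can give your bound. Let $E\subset I_n$ be the plateau of measure $\mu/4$ on which $p_0=a-d$ and $p_1=b+d$. Multiplication by $\1_E$ is contractive on both endpoints. In any decomposition $\1_E=g_0+g_1$, one of the sets $\{x\in E:|g_0|\ge1/2\}$ and $\{x\in E:|g_1|\ge1/2\}$ has measure at least $\mu/8$. Hence
\[
K(t,\1_{I_n};\X)\ge\tfrac12\min\bigl((\mu/8)^{1/(a-d)},\,t(\mu/8)^{1/(b+d)}\bigr),\qquad \norm{\1_{I_n}}_{\X_{\theta,q}}\gtrsim\mu^{(1-\theta)/(a-d)+\theta/(b+d)}.
\]
This exceeds $\mu^{(1-\theta)/a+\theta/b}$ by the factor $\exp\bigl(d\log(1/\mu)[\theta/(b(b+d))-(1-\theta)/(a(a-d))]\bigr)$. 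That factor tends to infinity along your subsequence whenever $\theta/b^2>(1-\theta)/a^2$. The other plateau, where $p_0=a+d$ and $p_1=b-d$, does the same when $\theta/b^2<(1-\theta)/a^2$. So for every $\theta\neq b^2/(a^2+b^2)$ your blocks satisfy $\norm{z_k\1_{I_{n_k}}}_{\X_{\theta,q}}\gtrsim2^{-k}e^{c4^k}\to\infty$, and therefore $\norm{f_\theta}_{\X_{\theta,q}}=\infty$.

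The missing step is the one you set aside. Apply \eqref{eq:singleupper} to each half separately, as an element of $X_0\cap X_1$, and add the two bounds with Lemma~\ref{lem:sum}. The estimate does not commit a half to one endpoint. It bounds the half's interpolation norm by the geometric mean of its endpoint norms. On the left half $p_0\le a+d$ and $p_1\le b$; on the right half $p_0\le a$ and $p_1\le b+d$. With $r=\min(1,q)$ this gives
\[
\norm{\1_{I_n}}_{\X_{\theta,q}}\le2^{1/r}c_{\theta,q}\bigl(\mu^{(1-\theta)/(a+d)+\theta/b}+\mu^{(1-\theta)/a+\theta/(b+d)}\bigr).
\]
Now compare with the intrinsic size $M_n=\norm{\1_{I_n}}_{X_0}^{1-\theta}\norm{\1_{I_n}}_{X_1}^{\theta}\asymp\mu^{(1-\theta)/(a+d)+\theta/(b+d)}$. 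Each half gains on one side only, so the total gain is $\mu^{\gamma_n}$. Here $\gamma_n$ is the \emph{minimum}, not the sum as in your ratio, of $(1-\theta)(1/a-1/(a+d))$ and $\theta(1/b-1/(b+d))$. It is still at least a fixed positive multiple of $d_n$.

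Then normalize by the intrinsic quantity, for instance $f_\theta=\sum_k kM_{n_k}^{-1}\1_{I_{n_k}}$ with $d_{n_k}\log(1/\mu_{n_k})\ge k^2$. The blocks have $\X_{\theta,q}$-norm $O(ke^{-ck^2})$, while Lemma~\ref{lem:atoms} gives $\norm{f_\theta}_{\Y_{\theta,q}}\ge c_{\theta,q}k$. This normalization does not involve $q$, so no uniformity in $q$ is needed; for each fixed $q$ the constant $2^{1/r}c_{\theta,q}$ is harmless.
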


\begin{proof}
Continuity at the endpoints of each $I_n$ follows because $h$
vanishes in neighborhoods of $0$ and $1$. The only accumulation
point of the interval endpoints is zero, and $d_n\to0$ proves
continuity there. The same observation proves smoothness away from
zero. The remaining pointwise assertions follow from
\eqref{eq:parameters}.

On $I_n$, $p_0$ attains its maximum $a+d_n$ on a subinterval of
length $\mu_n/4$, while $p_1$ attains its maximum $b+d_n$ on a
different subinterval of the same length. On $I_0$ both exponents
are constant. Apply Lemma~\ref{lem:plateau} separately to the two
exponents, with $\kappa=1/4$. The resulting endpoint projections
are bounded independently of $B$.

The oscillation of either exponent on $I_n$ is $2d_n$.
Consequently Lemma~\ref{lem:average} proves
(1)$\Rightarrow$(2). Conditional expectation fixes
$\G$-measurable functions, so (2)$\Rightarrow$(3). Applying a common
projection to an arbitrary ambient decomposition proves
(3)$\Rightarrow$(4), with constant equal to the larger endpoint
operator norm.

We have $X_1\subset X_0$ continuously: if
$\rho_{p_1}(f)\leq1$, then
\[
 \rho_{p_0}(f)
 \leq
 \int_{\{|f|\leq1\}}1\,dx+
 \int_{\{|f|>1\}}|f|^{p_1(x)}\,dx
 \leq2.
\]
Hence $X_0+X_1=X_0$ and $Y_0+Y_1=Y_0$. Every $\G$-measurable
element of $\X_{\theta,q}$ therefore belongs to $Y_0+Y_1$.
Integration of the $K$-closedness inequality proves
(4)$\Rightarrow$(6). Clearly (6)$\Rightarrow$(5).

We prove \eqref{eq:strongfailure} when $B=\infty$; this will also
prove (5)$\Rightarrow$(1). Write
\[
 u_n=\norm{\1_{I_n}}_{X_0},\qquad
 v_n=\norm{\1_{I_n}}_{X_1},
\]
\[
 \alpha_n=
 \frac{1-\theta}{a+d_n}+\frac{\theta}{b+d_n},
 \qquad
 M_n=u_n^{1-\theta}v_n^\theta.
\]
Lemma~\ref{lem:atoms}, and the fact that all exponents exceed one,
give
\begin{equation}\label{eq:uv}
 \begin{split}
 \tfrac14\mu_n^{1/(a+d_n)}
 &\leq u_n\leq\mu_n^{1/(a+d_n)},\\
 \tfrac14\mu_n^{1/(b+d_n)}
 &\leq v_n\leq\mu_n^{1/(b+d_n)}.
 \end{split}
\end{equation}
In particular,
\begin{equation}\label{eq:M}
 \tfrac14\mu_n^{\alpha_n}\leq M_n\leq\mu_n^{\alpha_n}.
\end{equation}

Split $I_n$ into its left and right halves $I_n^+$ and $I_n^-$.
On the left half, $p_0\leq a+d_n$ and $p_1\leq b$; on the right
half, $p_0\leq a$ and $p_1\leq b+d_n$. Applying the upper indicator
estimate and \eqref{eq:singleupper} separately to the two halves
gives
\begin{align*}
 \norm{\1_{I_n^+}}_{\X_{\theta,q}}
 &\leq
 c_{\theta,q}
 \mu_n^{(1-\theta)/(a+d_n)+\theta/b},\\
 \norm{\1_{I_n^-}}_{\X_{\theta,q}}
 &\leq
 c_{\theta,q}
 \mu_n^{(1-\theta)/a+\theta/(b+d_n)}.
\end{align*}
With $r=\min(1,q)$, interpreted as one when $q=\infty$, set
\[
 \gamma_n=
 \min\left\{
 \theta\left(\frac1b-\frac1{b+d_n}\right),
 (1-\theta)\left(\frac1a-\frac1{a+d_n}\right)
 \right\}.
\]
Lemma~\ref{lem:sum} and \eqref{eq:M} imply
\begin{equation}\label{eq:ambientblock}
 \norm{M_n^{-1}\1_{I_n}}_{\X_{\theta,q}}
 \leq
 4\,2^{1/r}c_{\theta,q}\mu_n^{\gamma_n}.
\end{equation}
Furthermore,
\begin{equation}\label{eq:gammalower}
 \gamma_n\geq k_\theta d_n,\qquad
 k_\theta=
 \min\left\{
 \frac{\theta}{b(b+\varepsilon)},
 \frac{1-\theta}{a(a+\varepsilon)}
 \right\}>0.
\end{equation}
Since $B=\infty$, choose strictly increasing indices $n_k$ such
that
\begin{equation}\label{eq:subsequence}
 d_{n_k}\log(1/\mu_{n_k})\geq k^2,
 \qquad
 \log(1/\mu_{n_k})\geq k^2.
\end{equation}
Such a choice is possible because an unbounded sequence remains
unbounded after deletion of finitely many terms, and
$d_n\leq\varepsilon$. Define
\begin{equation}\label{eq:f}
 f_\theta=
 \sum_{k=1}^{\infty}
 \frac{k}{M_{n_k}}\1_{I_{n_k}}.
\end{equation}
The sum is pointwise well defined on disjoint intervals. Let
\[
 d=\frac1{a+\varepsilon}-\frac1b>0.
\]
By \eqref{eq:uv},
\[
 \begin{split}
 \norm{\frac{k}{M_{n_k}}\1_{I_{n_k}}}_{X_0}
 &=k(u_{n_k}/v_{n_k})^\theta\\
 &\leq4^\theta k\mu_{n_k}^{d\theta}\\
 &\leq4^\theta k e^{-d\theta k^2}.
 \end{split}
\]
Thus the series converges in $X_0$ and its limit is
$f_\theta\in Y_0$. Equations
\eqref{eq:ambientblock}--\eqref{eq:subsequence} give
\[
 \norm{\frac{k}{M_{n_k}}\1_{I_{n_k}}}_{\X_{\theta,q}}
 \leq
 4\,2^{1/r}c_{\theta,q}k e^{-k_\theta k^2}.
\]
The $r$th powers are summable for every $q>0$; for $q=\infty$ the
displayed bounds themselves are summable.
Lemma~\ref{lem:sum} proves membership in every ambient
interpolation space, using the same $f_\theta$ for all $q$.

On the other hand, restriction to $I_{n_k}$ and
\eqref{eq:atomK} give
\[
 \norm{f_\theta}_{\Y_{\theta,q}}
 \geq
 c_{\theta,q}
 \frac{k}{M_{n_k}}
 u_{n_k}^{1-\theta}v_{n_k}^{\theta}
 =
 c_{\theta,q}k.
\]
Letting $k\to\infty$ proves nonmembership for every $q$, including
$q=\infty$. This proves \eqref{eq:strongfailure} and
(5)$\Rightarrow$(1).
\end{proof}

\begin{proposition}[Quantitative loss of $K$-closedness]
\label{prop:loss}
In the construction above, write
$u_n=\norm{\1_{I_n}}_{X_0}$ and
$v_n=\norm{\1_{I_n}}_{X_1}$, and let $t_n=u_n/v_n$ and
\[
 k_*=
 \min\left\{
 \frac1{a(a+\varepsilon)},
 \frac1{b(b+\varepsilon)}
 \right\}.
\]
Then
\begin{equation}\label{eq:Kratio}
 \frac{K(t_n,\1_{I_n};\Y)}
      {K(t_n,\1_{I_n};\X)}
 \geq
 \frac18
 \exp\bigl(k_*d_n\log(1/\mu_n)\bigr).
\end{equation}
\end{proposition}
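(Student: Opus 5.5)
The plan is to compute the $\Y$-side exactly, bound the $\X$-side from above by splitting $I_n$ into halves as in the proof of Theorem~\ref{thm:main}, and compare the two.

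\emph{$\Y$-side.} By \eqref{eq:atomK} in Lemma~\ref{lem:atoms} with $z=1$ and $A=I_n$,
\[
K(t,\1_{I_n};\Y)=\min(u_n,tv_n).
\]
At $t=t_n=u_n/v_n$ both entries coincide, so $K(t_n,\1_{I_n};\Y)=u_n$. The lower bound in \eqref{eq:uv} then gives $u_n\geq\frac14\mu_n^{1/(a+d_n)}$.

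\emph{$\X$-side.} Use the ambient decomposition $\1_{I_n}=\1_{I_n^-}+\1_{I_n^+}$, placing the right half in $X_0$ and the left half in $X_1$. This is the opposite of where each exponent is large. On $I_n^-$ we have $p_0\leq a$, and on $I_n^+$ we have $p_1\leq b$. Each half has measure at most $\mu_n$, so the upper indicator estimate of Lemma~\ref{lem:atoms} gives
\[
K(t_n,\1_{I_n};\X)\leq \mu_n^{1/a}+t_n\mu_n^{1/b}.
\]

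\emph{Comparison.} Divide this bound by $u_n$ and write $t_n/u_n=1/v_n$. Then
\[
\frac{K(t_n,\1_{I_n};\X)}{K(t_n,\1_{I_n};\Y)}
\leq \frac{\mu_n^{1/a}}{u_n}+\frac{\mu_n^{1/b}}{v_n}.
\]
Apply the lower bounds in \eqref{eq:uv} to each term:
\[
\frac{\mu_n^{1/a}}{u_n}\leq 4\mu_n^{\frac1a-\frac1{a+d_n}},\qquad
\frac{\mu_n^{1/b}}{v_n}\leq 4\mu_n^{\frac1b-\frac1{b+d_n}}.
\]
Since $d_n\leq\varepsilon$, the exponents satisfy
\[
\frac1a-\frac1{a+d_n}=\frac{d_n}{a(a+d_n)}\geq \frac{d_n}{a(a+\varepsilon)}\geq k_*d_n,
\]
and similarly $\frac1b-\frac1{b+d_n}\geq k_*d_n$. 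Because $\mu_n<1$, each term is therefore at most $4\exp(-k_*d_n\log(1/\mu_n))$. Summing the two terms and inverting yields \eqref{eq:Kratio} with the constant $1/8$.

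\emph{Main obstacle.} The only real choice is the decomposition on the $\X$-side. Each half must go into the endpoint whose exponent is not raised on that half, so that the norm gains the factor $\mu_n^{1/a}$ or $\mu_n^{1/b}$ against the maximal-exponent size of $u_n$ or $v_n$. Once this choice is made, the rest is the bookkeeping above, using only the constants $\tfrac14$ from \eqref{eq:uv}.
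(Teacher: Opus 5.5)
Your proposal is correct and follows the paper's proof essentially step for step. You evaluate the numerator exactly as $u_n$ via \eqref{eq:atomK}, bound the ambient $K$-functional by placing $\1_{I_n^-}$ in $X_0$ and $\1_{I_n^+}$ in $X_1$, and then use the lower bounds in \eqref{eq:uv} together with $d_n\leq\varepsilon$ and $\mu_n<1$ to get the factor $8\mu_n^{k_*d_n}$.
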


\begin{proof}
The numerator equals $u_n$ by \eqref{eq:atomK}. Place
$\1_{I_n^-}$ in $X_0$ and $\1_{I_n^+}$ in $X_1$. Their endpoint
norms are at most $\mu_n^{1/a}$ and $\mu_n^{1/b}$, respectively.
Hence
\[
 \begin{split}
 \frac{K(t_n,\1_{I_n};\X)}{u_n}
 &\leq
 \frac{\mu_n^{1/a}}{u_n}
 +
 \frac{\mu_n^{1/b}}{v_n}\\
 &\leq
 4\mu_n^{1/a-1/(a+d_n)}
 +
 4\mu_n^{1/b-1/(b+d_n)}\\
 &\leq8\mu_n^{k_*d_n}.
 \end{split}
\]
Taking reciprocals proves \eqref{eq:Kratio}.
\end{proof}

\begin{corollary}[Arbitrarily small continuous perturbations]
\label{cor:continuous}
For every $1<a<b<\infty$ and every $\delta>0$, there are continuous
exponents within $\delta$ of $a$ and $b$, respectively, and a
completed sub-$\sigma$-algebra on $[0,1]$ for which both endpoint
subspaces are positively complemented and \eqref{eq:strongfailure}
holds for every $0<\theta<1$. The exponents can be smooth away from
zero and can satisfy $p_0+p_1=a+b$.
\end{corollary}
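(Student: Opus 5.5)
The corollary should follow by specializing the construction preceding Theorem~\ref{thm:main}, so the plan is to choose the data $(\varepsilon,\mu_n,d_n)$ so that $B=\infty$ and then read off every assertion from that theorem. Given $1<a<b<\infty$ and $\delta>0$, I would first fix $\varepsilon$ with
\[
 0<\varepsilon<\min\{a-1,(b-a)/2,\delta\},
\]
so that \eqref{eq:parameters} holds. The bound $\norm{p_0-a}_\infty,\norm{p_1-b}_\infty\leq\varepsilon<\delta$ from Theorem~\ref{thm:main} then gives the required closeness. The theorem also supplies continuity on $[0,1]$, smoothness on $(0,1]$, and the identity $p_0+p_1=a+b$. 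The $\sigma$-algebra $\G$ is the completed one generated by the intervals $I_n$ and the null sets, as in the construction.

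Next I would choose sequences satisfying \eqref{eq:sequences} with $B=\sup_n d_n\log(1/\mu_n)=\infty$. A concrete choice is
\[
 \mu_n=2^{-1}e^{-n^2},\qquad d_n=\min\{\varepsilon,1/n\}.
\]
Then $\sum_n\mu_n<1$, $0<d_n\leq\varepsilon$, and $d_n\to0$. Moreover, for large $n$,
\[
 d_n\log(1/\mu_n)=(n^2+\log 2)/n\to\infty .
\]
The only point to verify is that $d_n\to0$ and $d_n\log(1/\mu_n)\to\infty$ hold simultaneously. This is possible because $\mu_n$ may decay superexponentially. I expect this to be the only step needing any thought, and it is routine.

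With these choices, Theorem~\ref{thm:main} applies directly. Each $Y_j$ is the range of a bounded positive projection on $X_j$, which gives the positive complementation of both endpoint subspaces. Since $B=\infty$, the theorem yields, for each $0<\theta<1$, a function $f_\theta\in Y_0$ satisfying \eqref{eq:strongfailure}. This completes the corollary.
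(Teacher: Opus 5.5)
Your proposal is correct and follows the same route as the paper. You pick $\varepsilon<\min\{a-1,(b-a)/2,\delta\}$ and data with superexponentially small $\mu_n$ and slowly decaying $d_n$ so that $B=\infty$, then read everything off Theorem~\ref{thm:main}; the paper takes $\mu_n=e^{-n^2}/4$, $d_n=\varepsilon/(n+1)$, and your $\mu_n=e^{-n^2}/2$, $d_n=\min\{\varepsilon,1/n\}$ works just as well.
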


\begin{proof}
Choose $\varepsilon<\delta$ satisfying \eqref{eq:parameters} and set
\[
 \mu_n=e^{-n^2}/4,\qquad d_n=\varepsilon/(n+1).
\]
Then $\sum_n\mu_n<1$ and
\[
 d_n\log(1/\mu_n)
 =
 \frac{\varepsilon(n^2+\log4)}{n+1}
 \longrightarrow\infty.
\]
Apply Theorem~\ref{thm:main}.
\end{proof}

\begin{remark}
For the same lengths $\mu_n=e^{-n^2}/4$, choosing
$d_n=\varepsilon/(n+1)^2$ instead makes $B$ finite, and lifting holds.
Thus convergence of the exponent oscillations to zero does not
decide the question. Their rate relative to atom size does.

Also, on each $I_n$ condition \eqref{eq:knowncondition} is exactly
$\mu_n^{-2d_n}\leq C$. The necessity proved here applies to this
opposed profile family, not to arbitrary exponent pairs.
\end{remark}

\begin{remark}[Continuity and log-H\"older regularity]
The continuity in Theorem~\ref{thm:main} is weaker than the usual
local log-H\"older condition
\[
 |p(x)-p(y)|\leq
 \frac{C}{\log(e+1/|x-y|)}\qquad(x\ne y).
\]
Indeed, choose points $x_n,y_n$ in the two plateau regions of
$I_n$. For either exponent the difference is $2d_n$, while
$|x_n-y_n|\leq\mu_n$. A local log-H\"older bound would therefore give
\[
 2d_n\log(1/\mu_n)
 \leq2d_n\log(e+1/\mu_n)\leq C.
\]
Consequently the examples with $B=\infty$ are not locally
log-H\"older continuous. Their continuity follows instead from
$d_n\to0$.
\end{remark}

\section{A finite-profile classification}\label{sec:finite}

We now take the same finite exponent profile on every atom.
Let $\mu_n>0$, $\sum_n\mu_n=1$, and let
$\pi_1,\ldots,\pi_m>0$ satisfy $\sum_\ell\pi_\ell=1$. On
\[
 \Omega=\N\times\{1,\ldots,m\}
\]
give $(n,\ell)$ mass $\mu_n\pi_\ell$. Let $\G$ be the
$\sigma$-algebra generated by the fibres
\[
 A_n=\{n\}\times\{1,\ldots,m\}.
\]
Choose numbers $p_{j\ell}$ satisfying
\[
 1<\min_\ell p_{0\ell}
 \leq A_0:=\max_\ell p_{0\ell}
 <\min_\ell p_{1\ell}
 \leq A_1:=\max_\ell p_{1\ell}<\infty.
\]
Define
\[
 p_j(n,\ell)=p_{j\ell},\qquad
 X_j=L^{p_j(\cdot)},\qquad
 Y_j=X_j\cap L^0(\G).
\]

\begin{theorem}\label{thm:finite}
The following are equivalent:
\begin{enumerate}
 \item There exists $\ell_*$ such that
 $p_{0\ell_*}=A_0$ and $p_{1\ell_*}=A_1$.
 \item There is a common bounded projection for $\Y\subset\X$.
 \item $\Y$ is $K$-closed in $\X$.
 \item For at least one interior parameter $(\theta,q)$,
 \[
 \Y_{\theta,q}=\X_{\theta,q}\cap L^0(\G)
 \]
 as sets.
\end{enumerate}
When these conditions hold, equality holds with equivalent
quasi-norms at every interior parameter. When they fail, for each
$\theta$ there is a function satisfying \eqref{eq:strongfailure}.

There is also an explicit power law for the $K$-functional loss.
Put
\[
 d=1/A_0-1/A_1,\qquad t_n=\mu_n^d,
\]
and
\[
 \beta=
 \min_{1\leq\ell\leq m}
 \max\{1/p_{0\ell},d+1/p_{1\ell}\}.
\]
Then, with constants independent of $n$,
\begin{equation}\label{eq:finiteK}
 \frac{K(t_n,\1_{A_n};\Y)}
      {K(t_n,\1_{A_n};\X)}
 \asymp
 \mu_n^{-(\beta-1/A_0)}.
\end{equation}
The exponent on the right is positive precisely when condition
(1) fails.
\end{theorem}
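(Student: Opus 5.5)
Under condition (1), I will use the single fibre point $\ell_*$ where both exponents are maximal. Define
\[
 Rf=\sum_n f(n,\ell_*)\1_{A_n}.
\]
This is Lemma~\ref{lem:plateau} applied simultaneously to $p_0$ and $p_1$, with $E_n=\{(n,\ell_*)\}$ and $\kappa=\pi_{\ell_*}$. Hence $R$ is a common bounded projection, so (1)$\Rightarrow$(2). The implications (2)$\Rightarrow$(3)$\Rightarrow$(equality with equivalent quasi-norms for every interior parameter)$\Rightarrow$(4) go exactly as in the proof of Theorem~\ref{thm:main}. In particular $X_1\subset X_0$ continuously, because $p_0<p_1$ pointwise; the inclusion argument there used only this. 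So $Y_0+Y_1=Y_0$, and integrating the $K$-closedness inequality gives the equality.

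The core of the proof is the two-sided estimate \eqref{eq:finiteK}. Since the exponents are finitely many fixed values, Lemma~\ref{lem:atoms} gives $u_n=\norm{\1_{A_n}}_{X_0}\asymp\mu_n^{1/A_0}$ and $v_n\asymp\mu_n^{1/A_1}$, with $\kappa=\min_\ell\pi_\ell$. With $t_n=\mu_n^d$ we have $t_nv_n\asymp u_n$, so the numerator is $\asymp\mu_n^{1/A_0}$ by \eqref{eq:atomK}.

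For the upper bound on the ambient $K$-functional, I put the point $(n,\ell)$ into $X_0$ or $X_1$ according to which of $1/p_{0\ell}$ and $d+1/p_{1\ell}$ is larger. The costs are $(\mu_n\pi_\ell)^{1/p_{0\ell}}$ and $t_n(\mu_n\pi_\ell)^{1/p_{1\ell}}$ respectively, so the total is $\lesssim\sum_\ell\mu_n^{\max\{1/p_{0\ell},d+1/p_{1\ell}\}}$. Since $\mu_n\le1$ this sum is $\lesssim\mu_n^{\beta'}$, with
\[
 \beta'=\min_\ell\max\{\cdots\}.
\]
I need to check that $\beta'=\beta$. The definition of $\beta$ as a minimum over $\ell$ of maxima looks inverted relative to what a per-point choice produces, and this is the \emph{main obstacle}. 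The per-point choice gives cost governed by $\max_\ell\min\{1/p_{0\ell},d+1/p_{1\ell}\}$ as an exponent of $\mu_n$. Taking the best choice at each $\ell$, the cost is $\asymp\mu_n^{\min_\ell\max\{\cdot\}}$, because larger exponents of $\mu_n$ mean smaller costs and the worst point dominates the sum. This matches the stated $\beta$.

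For the matching lower bound I use arbitrary decompositions $f_0+f_1=\1_{A_n}$. At each point $(n,\ell)$ one of $|f_0|$ and $|f_1|$ is at least $1/2$. Restricting to that point, the lattice property and Lemma~\ref{lem:atoms} give the cost at least $c\min\{\mu_n^{1/p_{0\ell}},t_n\mu_n^{1/p_{1\ell}}\}$. Taking the worst $\ell$ gives a bound $\gtrsim\mu_n^{\beta}$, which yields \eqref{eq:finiteK}.

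For the sign statement, note that $\beta\le1/A_0$ for the index $\ell_*$ when (1) holds, since then $d+1/p_{1\ell_*}=1/A_0$. Also $\beta\geq 1/A_0$ always, since each term $1/p_{0\ell}\ge1/A_0$ and $d+1/p_{1\ell}\ge d+1/A_1=1/A_0$. If (1) fails, then for each $\ell$ either $p_{0\ell}<A_0$ or $p_{1\ell}<A_1$, which makes one of the two inside the max strictly exceed $1/A_0$. Finitely many $\ell$ then give $\beta>1/A_0$. This proves (3)$\Rightarrow$(1), provided $\mu_n\to0$, which holds because $\sum\mu_n=1$.

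Finally, for the strong failure when (1) fails, I will mimic the construction of $f_\theta$ in Theorem~\ref{thm:main}. Set $M_n=u_n^{1-\theta}v_n^\theta$. I estimate $\norm{M_n^{-1}\1_{A_n}}_{\X_{\theta,q}}$ by splitting $A_n$ into its points and applying \eqref{eq:singleupper} and Lemma~\ref{lem:sum}. For a point where, say, $p_{0\ell}<A_0$, placing it in $X_0$ gives $\mu_n^{\gamma}$ with $\gamma=(1-\theta)(1/p_{0\ell}-1/A_0)>0$; otherwise placing it in $X_1$ gives the analogous bound. So the bound is $\lesssim\mu_n^{\gamma}$ with $\gamma>0$ independent of $n$. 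I then choose a subsequence with $\mu_{n_k}\le e^{-k^2}$ and set $f_\theta=\sum k M_{n_k}^{-1}\1_{A_{n_k}}$. Membership in every $\X_{\theta,q}$ follows from Lemma~\ref{lem:sum}, and nonmembership in every $\Y_{\theta,q}$ follows from \eqref{eq:atomK}. This also gives (4)$\Rightarrow$(1).
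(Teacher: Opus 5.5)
Your proposal is correct and follows the paper's proof in all essentials: the same coordinate-copying projection, the same single-point restriction and single-point decomposition for the ambient $K$-functional, and the same block function $f_\theta$. The only variations are that you get boundedness of the projection and $u_n\asymp\mu_n^{1/A_0}$, $v_n\asymp\mu_n^{1/A_1}$ from Lemma~\ref{lem:plateau} (with $E_n=\{(n,\ell_*)\}$) and Lemma~\ref{lem:atoms} instead of the diagonal equivalence \eqref{eq:diagonal}, and that you also read off (3)$\Rightarrow$(1) directly from \eqref{eq:finiteK}. Two clean-ups are needed. The ``main obstacle'' is not one: $\beta'$ is literally $\beta$, and the sentence claiming the per-point choice gives $\max_\ell\min\{\cdot\}$ is wrong and should be deleted. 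Likewise, in the block estimate nothing is ``placed'' in $X_0$; \eqref{eq:singleupper} simply gives an exponent of $\mu_n$ that is at least $\gamma$.
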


\begin{proof}
If $g=(g_n)$ is regarded as a diagonal function
$Dg(n,\ell)=g_n$, then
\begin{equation}\label{eq:diagonal}
 c_j\norm g_{L^{A_j}(\mu)}
 \leq\norm{Dg}_{X_j}
 \leq\norm g_{L^{A_j}(\mu)},
 \qquad
 c_j=\pi_{\ell_j}^{1/A_j},
\end{equation}
where $\ell_j$ is any coordinate with $p_{j\ell_j}=A_j$.
For the lower estimate retain the $\ell_j$ term of the modular.
For the upper estimate normalize
$\norm g_{L^{A_j}(\mu)}=1$. Since $\mu$ is a probability measure
and $p_{j\ell}\leq A_j$, H\"older's inequality gives
\[
 \sum_n\mu_n|g_n|^{p_{j\ell}}\leq1.
\]
The weighted sum of these inequalities is the required modular
bound.

Under (1), the map
\[
 Pf(n,\ell)=f(n,\ell_*)
\]
is a projection onto diagonal functions. Restricting the modular to
coordinate $\ell_*$ gives
\[
 \norm{f(\cdot,\ell_*)}_{L^{A_j}(\mu)}
 \leq
 \pi_{\ell_*}^{-1/A_j}\norm f_{X_j}.
\]
Together with \eqref{eq:diagonal}, this proves (2). The implications
(2)$\Rightarrow$(3) and lifting at all parameters follow as in
Theorem~\ref{thm:main}; here again $X_1\subset X_0$.

Suppose (1) fails. Fix $\theta$ and put
\[
 \alpha=\frac{1-\theta}{A_0}+\frac\theta{A_1},
\]
\[
 \delta_\theta=
 \min_\ell
 \left[
 (1-\theta)
 \left(\frac1{p_{0\ell}}-\frac1{A_0}\right)
 +
 \theta
 \left(\frac1{p_{1\ell}}-\frac1{A_1}\right)
 \right].
\]
Each bracket is positive: its two nonnegative terms could vanish
together only at a common maximizing coordinate. Since there are
finitely many coordinates, $\delta_\theta>0$.

Set
\[
 u_n=\norm{\1_{A_n}}_{X_0},\qquad
 v_n=\norm{\1_{A_n}}_{X_1},\qquad
 M_n=u_n^{1-\theta}v_n^\theta.
\]
By \eqref{eq:diagonal},
\[
 u_n\asymp\mu_n^{1/A_0},\qquad
 v_n\asymp\mu_n^{1/A_1},\qquad
 M_n\asymp\mu_n^\alpha.
\]
The indicator of the single point $(n,\ell)$ has endpoint norm
$(\mu_n\pi_\ell)^{1/p_{j\ell}}$. Summing its interpolation bounds
from \eqref{eq:singleupper} over the finitely many coordinates yields
\begin{equation}\label{eq:finiteblock}
 \norm{M_n^{-1}\1_{A_n}}_{\X_{\theta,q}}
 \leq C_{\theta,q}\mu_n^{\delta_\theta}.
\end{equation}

Choose increasing indices $n_k$ with $\mu_{n_k}\leq e^{-k^2}$,
which is possible because $\mu_n\to0$, and put
\[
 f_\theta=
 \sum_k kM_{n_k}^{-1}\1_{A_{n_k}}.
\]
The $X_0$-norms of its summands are bounded by
\[
 Ck\mu_{n_k}^{\theta(1/A_0-1/A_1)},
\]
so the series converges in $Y_0$.
Equation~\eqref{eq:finiteblock} and Lemma~\ref{lem:sum} place it in
every ambient interpolation space. Lemma~\ref{lem:atoms} gives the
intrinsic lower bound $c_{\theta,q}k$ for each $k$, proving
\eqref{eq:strongfailure} and disproving (4).

To compute the loss in \eqref{eq:finiteK}, restrict to one point
for a lower bound and decompose into single points for an upper
bound. This gives
\begin{align*}
 &\max_\ell
 \min\{(\mu_n\pi_\ell)^{1/p_{0\ell}},
             t(\mu_n\pi_\ell)^{1/p_{1\ell}}\}\\
 &\hspace{25mm}\leq K(t,\1_{A_n};\X)\\
 &\hspace{25mm}\leq
 \sum_\ell
 \min\{(\mu_n\pi_\ell)^{1/p_{0\ell}},
             t(\mu_n\pi_\ell)^{1/p_{1\ell}}\}.
\end{align*}
For the upper bound assign each point to its cheaper endpoint and
use the triangle inequality. Since the profile is finite and fixed,
substituting $t=t_n$ shows that the ambient $K$-functional is
comparable to $\mu_n^\beta$.

Equations~\eqref{eq:atomK} and \eqref{eq:diagonal} show that the
intrinsic one is comparable to $\mu_n^{1/A_0}$. The quotient
therefore has the order stated in \eqref{eq:finiteK}. Both terms
in each maximum defining $\beta$ are at least $1/A_0$. The maximum
equals $1/A_0$ exactly at a common maximizing coordinate, which
proves the final assertion.
\end{proof}

\begin{remark}[The finite direct-sum model]
Up to equivalent norms, $X_j$ is the finite direct sum of
$L^{p_{j\ell}}(\mu)$, and \eqref{eq:diagonal} identifies $Y_j$
with $L^{A_j}(\mu)$. For fixed $\theta$, write
\[
 \frac1{r_\ell}=\frac{1-\theta}{p_{0\ell}}
                  +\frac\theta{p_{1\ell}},
 \qquad
 \frac1{r_*}=\frac{1-\theta}{A_0}+\frac\theta{A_1}.
\]
These are the exponents in the classical real interpolation
formulas for the coordinate couples and the diagonal couple
\cite{BL}. We have $r_\ell\leq r_*$, with equality precisely at
a common maximizing coordinate. The common-coordinate criterion
thus agrees with the classical interpolation formulas. The block
proof additionally yields \eqref{eq:finiteK} and the same
separating function for every $q$.
\end{remark}

\begin{remark}[A numerical profile]
Take $m=2$, $\pi_1=\pi_2=1/2$, and
\[
 (p_{01},p_{02})=(3,2),\qquad
 (p_{11},p_{12})=(4,8).
\]
Then
\[
 A_0=3,\qquad A_1=8,\qquad
 d=5/24,\qquad \beta=11/24.
\]
The ratio in \eqref{eq:finiteK} is comparable to $\mu_n^{-1/8}$.
The interpolation gap in the proof is
\[
 \delta_\theta
 =
 \min\{\theta/8,(1-\theta)/6\}>0.
\]
Each endpoint admits a bounded projection obtained by copying its
own maximal-exponent coordinate, but no common bounded projection
exists.
\end{remark}

\section{Adapted sequence spaces}\label{sec:adapted}

Let $\F$ be the completed Lebesgue $\sigma$-algebra on $[0,1]$ and
define a filtration by
\[
 \F_1=\G,\qquad \F_n=\F\quad(n\geq2).
\]
For the exponents in Theorem~\ref{thm:main}, put
\[
 \mathcal X_j=L^{p_j(\cdot)}([0,1];\ell_2),
\]
\[
 \mathcal A_j=
 \{z\in\mathcal X_j:
 z_n\text{ is }\F_n\text{-measurable}\}.
\]
The norm is
\[
 \norm{\left(\sum_n|z_n|^2\right)^{1/2}}_{p_j(\cdot)}.
\]
The adapted spaces are closed, since norm convergence implies
coordinatewise convergence in measure and admits almost everywhere
convergent subsequences in each coordinate.

\begin{corollary}\label{cor:adapted}
If $B=\infty$, then for every $0<\theta<1$ and $0<q\leq\infty$,
\[
 (\mathcal A_0,\mathcal A_1)_{\theta,q}
 \subsetneq
 (\mathcal X_0,\mathcal X_1)_{\theta,q}
 \cap\{\text{adapted sequences}\}.
\]
For fixed $\theta$, the same sequence witnesses strictness for
every $q$. Each $\mathcal A_j$ is nevertheless complemented in
$\mathcal X_j$.
\end{corollary}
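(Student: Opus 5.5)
The plan is to embed the scalar example of Theorem~\ref{thm:main} into the first coordinate. For a scalar function $f$, set $Jf=(f,0,0,\dots)$. Pointwise the $\ell_2$-norm of $Jf$ is $|f|$, so $J$ is an isometry of $X_j$ into $\mathcal X_j$. Also $Jf$ is adapted precisely when $f$ is $\G$-measurable, so $J$ maps $Y_j$ isometrically into $\mathcal A_j$. In the other direction, let $Q_1z=z_1$ be the first-coordinate map. Since $|z_1|\leq(\sum_n|z_n|^2)^{1/2}$ pointwise and $L^{p_j(\cdot)}$ has the lattice property, $Q_1$ is a contraction $\mathcal X_j\to X_j$. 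It also maps $\mathcal A_j$ into $Y_j$, and $Q_1J=\mathrm{id}$.

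The interpolation inclusions then follow from the interpolation property of the real method, applied to the bounded couple maps $J$ and $Q_1$.

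\begin{itemize}
\item Fix $\theta$, let $f_\theta\in Y_0$ be the function from \eqref{eq:strongfailure}, and put $z=Jf_\theta$.
\item Since $J:\X\to(\mathcal X_0,\mathcal X_1)$ is bounded on each endpoint (norm one), we get $K(t,Jf;\mathcal X)\leq K(t,f;\X)$. Hence $z$ lies in $(\mathcal X_0,\mathcal X_1)_{\theta,q}$ for every $q$, and $z$ is adapted.
\item Suppose $z\in(\mathcal A_0,\mathcal A_1)_{\theta,q}$ for some $q$. Applying $Q_1$ to an arbitrary decomposition $z=z^0+z^1$ with $z^j\in\mathcal A_j$ gives $K(t,f_\theta;\Y)\leq K(t,z;\mathcal A)$. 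This would put $f_\theta$ in $\Y_{\theta,q}$, which is a contradiction.
\end{itemize}

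So the same $z$ witnesses strictness for every $q$.

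The reverse inclusion $(\mathcal A_0,\mathcal A_1)_{\theta,q}\subset(\mathcal X_0,\mathcal X_1)_{\theta,q}\cap\{\text{adapted}\}$ is immediate. Its elements lie in $\mathcal A_0+\mathcal A_1$, which consists of adapted sequences. Moreover, the $K$-functional for the subcouple dominates the ambient one, as in \eqref{eq:inclusion}.

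For complementedness, let $R_j$ be the positive projection from Lemma~\ref{lem:plateau} for $p_j$, with $\kappa=1/4$, as in the proof of Theorem~\ref{thm:main}. Define
\[
 \mathcal P_jz=(R_jz_1,z_2,z_3,\dots).
\]
It fixes adapted sequences, since $z_n$ is unrestricted for $n\geq2$, and its range consists of adapted sequences. The step needing care is boundedness on $\mathcal X_j$. Write $z=(z_1,0,\dots)+(0,z_2,\dots)$. The second part is contracted by $\mathcal P_j$ and has norm at most $\norm{z}$ by the lattice property. The first part maps to $J R_j z_1$, with norm at most $\norm{R_j}\norm{z_1}_{p_j(\cdot)}\leq\norm{R_j}\norm{z}$. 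Therefore
\[
 \norm{\mathcal P_jz}\leq(1+\norm{R_j})\norm z.
\]
Since $R_j$ is positive, $\mathcal P_j$ is positive coordinatewise as well. I expect no real obstacle here. The only point to check is that the separate endpoint projections $\mathcal P_0\neq\mathcal P_1$ are not used in the interpolation argument: the failure is inherited through the common retraction $Q_1$, which is bounded on both endpoints.
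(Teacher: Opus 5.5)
Your proposal is correct and is essentially the paper's proof. It uses the isometric embedding $Jf=(f,0,0,\ldots)$ and the contractive retraction $Qz=z_1$ to transfer the membership and nonmembership of $f_\theta$; the paper states two-sided $K$-functional identities, where you use only the one-sided inequalities actually needed. Complementedness comes from the same operator $(R_jz_1,z_2,\ldots)$ with the same bound $\|R_j\|+1$.
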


\begin{proof}
Define
\[
 Jf=(f,0,0,\ldots),\qquad Qz=z_1.
\]
Both are endpoint contractions between the scalar and vector
couples, and $QJ$ is the identity. On the adapted subcouples they
map $Y_j$ to $\mathcal A_j$ and $\mathcal A_j$ to $Y_j$,
respectively. Applying these maps to decompositions in both
directions gives the exact identities
\[
 K(t,Jf;\mathcal X_0,\mathcal X_1)
 =
 K(t,f;X_0,X_1),
\]
\[
 K(t,Jf;\mathcal A_0,\mathcal A_1)
 =
 K(t,f;Y_0,Y_1).
\]
Now take $f=f_\theta$ from \eqref{eq:strongfailure}. The sequence
$Jf_\theta$ is adapted and has the required membership and
nonmembership properties.

Let $R_j:X_j\to Y_j$ be the positive projection from
Theorem~\ref{thm:main}. Then
\[
 \mathcal R_j(z_1,z_2,\ldots)
 =
 (R_jz_1,z_2,\ldots)
\]
is a projection onto $\mathcal A_j$. The pointwise estimate
\[
 \left(|R_jz_1|^2+\sum_{n\geq2}|z_n|^2\right)^{1/2}
 \leq
 |R_jz_1|+
 \left(\sum_{n\geq2}|z_n|^2\right)^{1/2}
\]
and the lattice property give
\[
 \norm{\mathcal R_jz}_{\mathcal X_j}
 \leq
 (\norm{R_j}+1)\norm z_{\mathcal X_j}.
\]
\end{proof}

\section{Further questions}\label{sec:scope}

For general exponent pairs, atomic oscillation control remains
sufficient by Lemma~\ref{lem:average}. It need not be necessary:
in the finite-profile model, a common maximizing coordinate gives
a bounded projection even when each exponent has a fixed positive
oscillation on atoms of arbitrarily small measure. It would be
useful to find a criterion expressed in terms of the joint
distribution of $p_0$ and $p_1$ on each atom.

The adapted example uses the filtration
$\F_1=\G$, $\F_n=\F$ for $n\geq2$, and a sequence with only
its first coordinate nonzero. The corresponding questions for
a fixed regular dyadic filtration and for martingale differences
remain open here. Conditional square-function Hardy spaces also
require a separate analysis because their norms involve
conditional expectations.

\section*{Declarations}

\paragraph{Funding.}
No funding was received for this work.

\paragraph{Competing interests.}
The author declares no competing interests.

\paragraph{Data availability.}
No datasets were generated or analyzed. All constructions and
proofs are contained in the article.

\paragraph{Declaration of generative AI use.}

The author used ChatGPT during the preparation
of this manuscript to support language editing, and LaTeX formatting. All content was subsequently
reviewed and revised by the author, who takes full responsibility for the published article.

\end{document}